\newcommand{\snorm}[1]{\|#1\|^2}
\newcommand{\bv}[1]{\mathbf{#1}}
\newcommand{\del}{\partial}
\theoremstyle{thmstyleone}
\theoremstyle{thmstyletwo}
\newtheorem{remark}{Remark}
\theoremstyle{thmstylethree}
\newtheorem{definition}{Definition}
\newtheorem{lemma}{Lemma}[section]
\begin{document}

\title{Efficient discretization of the Laplacian on complex geometries}

\author{\fnm{Gustav} \sur{Eriksson}}\email{gustav.eriksson@it.uu.se}

\affil{\orgdiv{Department of Information Technology}, \orgname{Uppsala University}, \orgaddress{\street{PO Box 337}, \city{Uppsala}, \postcode{S-751 05}, \country{Sweden}}}

\abstract{Highly accurate simulations of problems including second derivatives on complex geometries are of primary interest in academia and industry. Consider for example the Navier-Stokes equations or wave propagation problems of acoustic or elastic waves. Current finite difference discretization methods are accurate and efficient on modern hardware, but they lack flexibility when it comes to complex geometries. In this work I extend the continuous summation-by-parts (SBP) framework to second derivatives and combine it with spectral-type SBP operators on Gauss-Lobatto quadrature points to obtain a highly efficient discretization (accurate with respect to runtime) of the Laplacian on complex domains. The resulting Laplace operator is defined on a grid without duplicated points on the interfaces, thus removing unnecessary degrees of freedom in the scheme, and is proven to satisfy a discrete equivalent to Green's first identity. Semi-discrete stability using the new Laplace operator is proven for the acoustic wave equation in 2D. Furthermore, the method can easily be coupled together with traditional finite difference operators using glue-grid interpolation operators, resulting in a method with great practical potential. Two numerical experiments are done on the acoustic wave equation in 2D. First on a problem with an analytical solution, demonstrating the accuracy and efficiency properties of the method. Finally, a more realistic problem is solved, where a complex region of the domain is discretized using the new method and coupled to the rest of the domain discretized using a traditional finite difference method.}

\keywords{Laplace discretization, Summation-by-parts, Finite differences, Acoustic wave equation}

\maketitle

\section{Introduction}
It has long been known that high-order numerical methods are well suited for discretizing hyperbolic partial differential equations (PDE) \cite{Kreiss1972,doi:10.1137/0711076}. High-order methods not only provide accurate approximations for a relatively small number of degrees of freedom (DOFs), but they also lend themselves to efficient implementation on modern architecture due to improved cache utilization. A well-explored approach to constructing robust high-order methods is to use discretization schemes with summation-by-parts (SBP) properties. The concept originates from the finite difference (FD) community in the 70s, where finite difference operators were designed to mimic integration-by-parts in the discrete setting, which facilitates stability proofs for the discretized problem (e.g. the energy method). However, SBP operators are not limited to finite difference methods. The first SBP-FD methods were inspired by the continuous finite element method, and many commonly used numerical discretization methods can be cast in the form of SBP operators, for example finite volume methods \cite{NORDSTROM2003453,NORDSTROM2009875} and discontinuous Galerkin methods \cite{doi:10.1137/130932193,CHAN2018346,https://doi.org/10.1002/fld.3923}. Nevertheless, high-order finite difference methods in particular are very efficient for hyperbolic problems. Especially operators with diagonal norms (mass matrices) since these naturally lend themselves to explicit time-stepping schemes. How to construct high-order SBP finite difference schemes, and in particular how to impose boundary conditions, has garnered much attention in the past. Among the common methods are the simultaneous approximation term method (SBP-SAT) \cite{Carpenter1994,DelReyFernandez2014,Svard2014}, the projection method (SBP-P) \cite{Olsson1995a,Olsson1995,Mattsson2018}, and the ghost-point method (SBP-GP) \cite{CiCP-8-1074,doi:10.1137/20M1339702}.

Although diagonal-norm SBP finite difference operators suffer from a reduction in accuracy \cite{doi:10.1137/17M1139333} (for a $2p$th-order accurate interior stencil the boundary stencils can be at most of order $p$), it has been shown in a collection of recent papers that the error can be significantly reduced by moving a few grid points close to the boundaries \cite{MATTSSON201491}. These \emph{boundary-optimized} operators are very efficient (accurate with respect to computational costs) compared to traditional equidistant SBP operators for various PDEs, including the acoustic wave equation \cite{STIERNSTROM2023112376} and the compressible Euler equations \cite{Mattsson2018}. But, as for the traditional operators, they are only applicable to rectangular domains where the one-dimensional operators can be applied in all dimensions using tensor products. The usual tools to handle more complicated domains are: 1) mapping the PDE to a square reference domain and 2) decomposing the domain into quadrilateral blocks (possibly with curved boundaries) and coupling the blocks together using suitable interface conditions. Examples where these two approaches are used together to obtain high-order finite difference discretizations on complex domains can be found in \cite{Mattsson2007,TAZHIMBETOV2023112470,Almquist2020}. However, this approach can become problematic. Finding a block decomposition into quadrilaterals is non-trivial and, since both the traditional and boundary-optimized operators have a relatively high number of minimum grid points required to be defined, the blocks should be as large as possible. Otherwise, the total number of DOFs becomes unnecessarily large and the CFL condition demands very small time steps. Furthermore, the methods typically used to impose interface conditions, SBP-SAT, SBP-P, and SBP-GP, all require duplicated points on the interfaces. Therefore the number of duplicated points, which in a sense are superfluous DOFs, can become relatively large relative if the domain is decomposed into a large number of blocks (many interfaces).

An alternative to the traditional (equidistant) and boundary-optimized SBP finite difference operators for complex geometries is to use SBP operators defined on Gauss quadratures nodal points. These are a subclass of so-called generalized SBP (GSBP) operators, with relatively high accuracy on a few non-equidistant grid points. Extensive work has been done where such operators are derived and used to solve various problems, see for examples \cite{CARPENTER199674,doi:10.1137/120890144,doi:10.1137/140992205,DelReyFernandez2018}. In this work, I use GSBP operators defined on Gauss-Lobatto (GL) quadrature nodal points \cite{DELREYFERNANDEZ2014214}. For a given order, these are defined on a fixed and small number of non-equidistant grid points, including the endpoints, where the associated first derivative operator has non-repeating interior stencils. In my view, this makes them ideally suited for high-order discretizations on domains with many small blocks, for the following reasons: 
\begin{itemize}
	\item The total number of grid points per block for a given order is relatively small, which reduces the issues of unnecessarily refined regions in the domain.
	\item Since the interior stencils are non-repeating, higher-order derivative operators can be obtained by repeated use of the first derivative operators without the risk of spurious oscillations \cite{doi:10.1137/140992205}.
	\item Since the operators are defined on grids including the endpoints, the total number of DOFs in the scheme can be significantly reduced using the continuous SBP method \cite{doi:10.1137/15M1038360,doi:10.2514/6.2019-3206,Hicken2020} (or equivalently the embedding method \cite{olsson2024projections}) without any loss in accuracy, leading to a more efficient scheme.
\end{itemize}
Additionally, using SBP operators on the GL nodal points allows for straightforward computation of interpolation operators to other grid point distributions using the glue-grid framework \cite{Kozdon2016}. In \cite{doi:10.1137/22M1530690}, such interpolation operators are used to couple traditional SBP operators to a discontinuous Galerkin (DG) method for the acoustic wave equation. The idea in \cite{doi:10.1137/22M1530690} is to use the DG method in regions of the domain with complex geometries and the finite difference method elsewhere. In essence, the goal of this paper is to develop a scheme that solves the same problem, but with the added benefits of retaining diagonal-norms and not having to implement two different numerical methods. Furthermore, in \cite{doi:10.1137/22M1530690} the SBP-SAT method is used to impose both interface conditions, which has been shown to introduce additional stiffness to the scheme \cite{ERIKSSON2023111907,Eriksson2023}. Here I use a hybrid method where the continuity of the solutions is imposed strongly and the continuity of the normal derivatives weakly, thus avoiding this issue. The main contribution of this paper is a new SBP finite difference discretization of the Laplace operator that is efficient for many small quadrilateral blocks, is provably stable, and has a diagonal norm. Two numerical experiments are done on the acoustic wave equation. First an accuracy study on a problem with an analytical solution and then a realistic problem with real-world relevance to demonstrate the method's potential use, including a glue-grid interface coupling to traditional SBP operators.

The paper is structured as follows: in Section \ref{sec: prels} the notation and some definitions are presented, in Section \ref{sec: cont_SBP_lapl} the continuous SBP method for the Laplace operator is presented, in Section \ref{sec: aco_wave_eq} the discretization and stability proof of the acoustic wave equation is presented, in Section \ref{sec: num_res} the numerical experiments are done, and in Section \ref{sec: concl} conclusion are drawn.
\section{Preliminaries}
\label{sec: prels}
Consider a general domain $\Omega \in \mathbb{R}^d$, where $\partial \Omega$ denotes the boundary. Green's first identity states that, for two sufficiently smooth functions $u$ and $v$ defined on $\Omega$, the following holds:
\begin{equation}
	\label{eq: greens_first}
	(u,\Delta v)_\Omega = -(\nabla \cdot u, \nabla \cdot v)_\Omega + \langle u,\bv{n} \cdot \nabla v \rangle_{\partial \Omega},
\end{equation}
where $\bv{n}$ is the normal to the boundary and
\begin{equation}
	(u,v) = \int_\Omega u v \: d\bv{x} \quad \text{and} \quad \langle u,v \rangle_{\partial \Omega} = \int_{\partial \Omega} u v \: ds.
\end{equation}
The relationship \eqref{eq: greens_first} is useful when analyzing PDEs, it can for example be used to prove well-posedness of the heat equation and the acoustic wave equation. The numerical method for discretizing the Laplace operator is presented for $d = 2$ in this paper, but the method directly extends to $d = 3$. The approach taken is based on SBP operators, which leads to a discrete operator satisfying a discrete equivalent to \eqref{eq: greens_first}. 
\subsection{One-dimensional SBP operators}
The discretization of the two-dimensional Laplace operators will be constructed using one-dimensional SBP operators as building blocks, therefore I start with some necessary definitions in 1D. Let $I = [a,b]$ define an interval and let $\bv{x} = [x_1,x_2,...,x_m]$ be an ordered set of $m$ grid points (not necessarily equidistant) on $I$. In this work, I restrict myself to discretizations where $x_1 = a$ and $x_m = b$, i.e., the endpoints of the interval are represented by grid points. This includes the traditional and boundary-optimized finite difference SBP operators but also operators defined on the Gauss-Lobatto points that are used here. With this assumption, the boundary restriction operators, $e_l, e_r \in \mathbb{R}^m$ (operators restricting solution vectors to the boundaries), are particularly simple:
\begin{equation}
	e_l = [1,0,...,0]^\top \quad \text{and} \quad e_r = [0,...,0,1]^\top.
\end{equation}
SBP operators are associated with a norm matrix $H$, that defines the following inner product and norm:
\begin{equation}
	(\bv{u},\bv{v})_H = \bv{u}^\top H \bv{v} \quad \text{and} \quad \snorm{\bv{u}}_H = (\bv{u},\bv{u})_H, \quad \forall \bv{u},\bv{v} \in \mathbb{R}^m.
\end{equation}
In the case of Gauss-Lobatto SBP operators, $H$ is simply a diagonal matrix with the Gauss-Lobatto quadrature weights scaled to $I$ on the diagonal. The following definition specifies the SBP property of the first derivative operators, which will be useful later:
\begin{definition}
	\label{def: sbp_D1}
	A difference operator $D_1 \approx \frac{\del}{\del x}$ is said to be a first derivative SBP operator if, for the norm $H$, the relation
	\begin{equation}
		\label{eq: disc_sbp_prop_D1}
		H D_1 + D_1^\top H = -e_l e_l^\top + e_r e_r^\top,
	\end{equation}
	holds.
\end{definition}
The property \eqref{eq: disc_sbp_prop_D1} is the discrete equivalent to integration-by-parts. I will refer to the SBP operators as $p$th order accurate, meaning that they differentiate polynomials of order up to and including $p$ exactly. In the numerical experiments section I will present results for $p = 5$, $p = 7$, and $p = 9$, the explicit form of these operators can be found at \url{https://github.com/guer7/sbp_embed}.

Using the first derivative operators twice, a variable coefficient second derivative operator approximating $\frac{\del}{\del x}(b(x) \frac{\del}{\del x})$ is given by
\begin{equation}
	D_2^{(\bv{b})} = D_1 \bv{\bar{b}} D_1,
\end{equation}
where $\bv{b}$ is a vector of $b(x)$ evaluated in the grid points $\bv{x}$ and $\bv{\bar{b}}$ is a diagonal matrix with $\bv{b}$ on the diagonal. Constructing second derivative finite difference operators by applying the first derivative operator twice can have some drawbacks. For central finite difference operators with repeating interior stencils the resulting operator has a wide stencil and it fails to resolve the highest frequency mode on the grid, which can lead to spurious oscillations in the solution \cite{MATTSSON20082293}. The SBP GL operators used here are full matrices without repeating interior stencils and therefore no concept of wide stencils exists and spurious oscillations can not occur. Simultaneously, the SBP GL operators are very accurate for a relatively small number of grid points, which makes them ideally suited for discretizations of small blocks.
\subsection{Two-dimensional operators}
\label{subsec: two_dim_ops}
Consider a square reference domain $\tilde \Omega = [0,1] \times [0,1]$ with coordinates $\xi, \eta $. Using tensor products, the one-dimensional operators are extended to $\tilde \Omega$ as follows:
\begin{equation}
	\label{eq: disc_2d_tensor_prods}
	\begin{alignedat}{5}
		D_\xi &= (D_1 \otimes I_{m}), \quad &&D_\eta = (I_{m} \otimes D_1), \\
		H_\xi &= (H \otimes I_{m}), &&H_\eta = (I_{m} \otimes H), \\
		e_w &= (e_l^\top \otimes I_{m}), &&e_e = (e_r^\top \otimes I_{m}), \\
		e_s &= (I_{m} \otimes e_l^\top), &&e_n = (I_{m} \otimes e_r^\top), \\
	\end{alignedat}	
\end{equation}
where $I_m$ denotes the $m \times m$ identity matrix and $m$ is the number of grid points in each direction. Let also $N = m^2$ denote the total number of grid points in the block. The two-dimensional variable coefficient second derivative operators $D_{\xi \xi}^{(\bv{b})}$ and $D_{\eta \eta}^{(\bv{b})}$, approximating $\frac{\del}{\del \xi}(b(\xi,\eta) \frac{\del}{\del \xi})$ and $\frac{\del}{\del \eta}(b(\xi,\eta) \frac{\del}{\del \eta})$ respectively, can not be directly constructed using tensor products as in \eqref{eq: disc_2d_tensor_prods}. Instead, the one-dimensional operators are built line-by-line with the corresponding values of $\bv{b}$ and stitched together to form the two-dimensional operators. See \cite{Almquist2014} for more details on this. 

As previously mentioned, the two-dimensional operators derived above are defined on the rectangular reference domain $\tilde \Omega$. More complicated geometries are treated using curvilinear mappings. With this approach, the spatial derivatives in the PDE (expressed in terms of the coordinates in $\Omega$) are rewritten in terms of the coordinates in $\tilde \Omega$, and the mapped PDE is discretized using the SBP operators defined on the reference domain. Discretizations of the Laplace operators on curvilinear grids based on SBP finite difference operators have been presented in the literature many times. Here I simply restate some of the results needed for the upcoming analysis. For details on how to derive the transformed operators, see for example \cite{Almquist2014,Almquist2020,STIERNSTROM2023112376}.

Let $x = x(\xi,\eta)$ and $y = y(\xi,\eta)$ define a mapping from the physical domain to the reference domain $\tilde \Omega$ and let $\bv{x}, \bv{y} \in \mathbb{R}^{N}$ be vectors containing the grid points on the physical grid. A Laplace operator defined on the physical domain $\Omega$ is then given by
\begin{equation}
	\label{eq: disc_lapl_def}
	D_L = J^{-1} (D_{\xi \xi}^{(\alpha_1)} + D_\eta \beta D_\xi + D_\xi \beta D_\eta + D_{\eta \eta} ^{(\alpha_2)}),
\end{equation}
where 
\begin{equation}
	\label{eq: disc_metric_coeffsa}
	\begin{alignedat}{2}
		J &= X_\xi Y_\eta - {X}_\eta {Y}_\xi, \\
		\alpha_1 &= {J}^{-1} ({X}_\eta^2 + {Y}_\eta^2), \\
		\beta &= -{J}^{-1} ({X}_\xi {X}_\eta + {Y}_\xi {Y}_\eta), \\
		\alpha_2 &= {J}^{-1} ({X}_\xi^2 + {Y}_\xi^2),
	\end{alignedat}
\end{equation}
and
\begin{equation}
	\label{eq: disc_metric_dervs}
	\begin{alignedat}{2}
		{X}_\xi = \text{diag}(D_\xi \bv{x}), \\
		{X}_\eta = \text{diag}(D_\eta \bv{x}), \\
		{Y}_\xi = \text{diag}(D_\xi \bv{y}), \\
		{Y}_\eta = \text{diag}(D_\eta \bv{y}).
	\end{alignedat}
\end{equation}
The first derivative operators in the $x$- and $y$-directions are given by
\begin{equation}
	\begin{alignedat}{2}
		D_x &= J^{-1} ({Y}_\eta D_\xi - {Y}_\xi D_\eta), \\
		D_y &= J^{-1} (-{X}_\eta D_\xi + {X}_\xi D_\eta).
	\end{alignedat}
\end{equation}
The discrete inner product and boundary quadratures are given by
\begin{equation}
	\begin{alignedat}{2}
		{H}_\Omega &= H_\xi H_\eta J , \\
		{H_w} &=  H e_w {W}_2 e_w^\top,  \\
		{H_e} &= H e_e {W}_2 e_e^\top , \\
		{H_s} &= H e_s {W}_1 e_s^\top, \\
		{H_n} &= H e_n {W}_1 e_n^\top.
	\end{alignedat}
\end{equation}
where
\begin{equation}
	{W}_1 = \sqrt{{X}_\xi^2 + {Y}_\xi^2} \quad \text{and} \quad {W}_2 = \sqrt{{X}_\eta^2 + {Y}_\eta^2},
\end{equation}
are boundary scaling factors. The Laplace operator \eqref{eq: disc_lapl_def} can be shown to satisfy 
\begin{equation}
	\label{eq: disc_laplace_sbp}
	\begin{alignedat}{2}
		(\bv{u},D_L \bv{v})_{H_\Omega} &= - (D_x \bv{u}, D_x \bv{v})_{H_\Omega} - (D_y \bv{u}, D_y \bv{v})_{H_\Omega} \\
		&+ \sum_{k = w,e,s,n} \langle e_k \bv{u}, d_k \bv{v} \rangle_{H_k} , \quad \forall \bv{u},\bv{v} \in \mathbb{R}^{N},
	\end{alignedat}
\end{equation}
where 
\begin{equation}
	d_k = n_k^{(1)} e_k D_x + n_k^{(2)} e_k D_y, \quad k = w,e,s,n,
\end{equation}
approximates the normal derivatives on the boundaries. Here $n_k^{(1)}$ and $n_k^{(2)}$ are diagonal matrices with the $x$- and $y$-components of the normal vectors in the grid points at the boundaries. The discrete property \eqref{eq: disc_laplace_sbp} is the discrete equivalent to Green's first identity on a single curvilinear block.

For many geometries, it is not possible to construct a well-defined mapping from a square domain (for example the interior of a circle). In such cases, to discretize a PDE using these types of finite difference operators, one has to decompose the domain into smaller blocks, map each block to a square reference domain, and couple the blocks together. In the continuous setting, it is well known that the function and its normal derivative must be continuous at the interface between two blocks to retain Green's first identity \eqref{eq: greens_first}. But how to impose these \emph{interface conditions} in the discrete setting in a way such that a similar property to \eqref{eq: disc_laplace_sbp} is obtained in the whole domain is non-trivial, and has received a lot of attention in the past. The SBP-SAT method is the most studied \cite{Almquist2020,Wang2019,Mattsson2008}, but works using the SBP-P can also be found in the literature \cite{Mattsson2006}. Recently, in \cite{Eriksson2023,eriksson2023acoustic}, a hybrid SBP-P-SAT method was used, where continuity of the normal derivatives is imposed using SAT and continuity of the solution using the projection method. However, in all of the works cited above the resulting discretizations are defined on grids where the points on the interfaces are stored twice (once per block). This is no issue when relatively few blocks are needed to discretize the domain, but for domains where many small blocks are needed, it leads to a large number of unnecessary degrees of freedom. The goal of this paper is to derive a scheme that avoids this issue using the continuous SBP method.
\section{The continuous SBP Laplace operator}
\label{sec: cont_SBP_lapl}
The discretization method presented in this paper is numerically similar to the SBP-P-SAT discretizations presented in \cite{Eriksson2023,eriksson2023acoustic} for the acoustic wave equation and in \cite{ERIKSSON2023111907} for the dynamic beam equation, but instead of using the projection method to impose continuity across the interfaces, the continuous SBP method is used. The continuous SBP method was first presented in \cite{doi:10.1137/15M1038360} for first-order problems on simplex elements. In \cite{olsson2024projections} the same method is developed but presented slightly differently using an embedding operator (referred to as the embedding method). For completeness and notational clarity, I will present the method below following the notation in \cite{olsson2024projections}.

Consider a decomposition of a general two-dimensional domain into quadrilateral blocks (possibly with curved sides to handle physical boundaries). Discretize each block as described in Section \ref{sec: prels} and stack the grid points of all blocks in a matrix $\bv{S} \in \mathbb{R}^{N\times2}$ as follows:
\begin{equation}
	\label{eq: disc_glob_gridpoints}
	\bv{S} = 
	\begin{bmatrix}
		\bv{S}_1 \\ \bv{S}_2 \\ \vdots \\ \bv{S}_{N_b}
	\end{bmatrix},
\end{equation}
where $N_b$ denotes the number of blocks and 
\begin{equation}
	\bv{S}_i = 
	\begin{bmatrix}
		x_1^{(i)} & y_1^{(i)} \\
		x_2^{(i)} & y_2^{(i)} \\
		\vdots & \vdots \\
		x_{N_i}^{(i)} & y_{N_i}^{(i)}
	\end{bmatrix},
\end{equation}
is the coordinates of block $i$, with $N_i$ grid points. The total number of grid points is given by $N = \sum_{i = 1}^{N_b} N_i$. A discrete global solution vector $\bv{v} \in \mathbb{R}^{N}$ is defined in a similar fashion:
\begin{equation}
	\bv{v} = 
	\begin{bmatrix}
		\bv{v}_1 \\ \bv{v}_2 \\ \vdots \\ \bv{v}_{N_b}
	\end{bmatrix},
	\quad \text{where} \quad 
	\bv{v}_i = 
	\begin{bmatrix}
		v_1^{(i)} \\
		v_2^{(i)} \\
		\vdots \\
		v_{m_i}^{(i)} &
	\end{bmatrix}.
\end{equation}
Assuming that the discretization points in each block conform to the boundary, some of the rows in $\bv{S}$ (those corresponding to grid points at the interfaces) will be identical. Therefore, since the physics requires that the solution is continuous at these points, the global solution vector $\bv{v}$ is unnecessarily large. It suffices to only store the values in grid points at the interfaces once without losing any information. Removing these superfluous DOFs is one of the main benefits of the continuous SBP method.

Let $\hat N$ denote the number of unique grid points (unique rows in $S$) and $\hat{\bv{S}} \in \mathbb{R}^{\hat N \times 2}$ the coordinate matrix without duplicate grid points. Define a matrix $E \in \mathbb{R}^{N \times \hat N}$ such that
\begin{equation}
	\label{eq: disc_E_prop}
	\bv{S} = E \hat{\bv{S}},
\end{equation}
where each row $i$ of $E$ is only non-zero at the index $j$ corresponding to the row in $\hat{\bv{S}}$ that equals the $i$:th row in $\bv{S}$, where it is one. The operation $E \hat{\bv{S}}$ essentially duplicates the rows in $\hat{\bv{S}}$ corresponding to grid points at the interfaces and places them in the order defined to $\bv{S}$. The matrix $E$ is referred to as the \emph{embedding operator} and shall be used to construct a reduced size multiblock Laplace operator.
\begin{remark}
	The continuous SBP method is not limited to the SBP GL operators. The ideas presented here are equally applicable to other SBP operators on discretizations with grid points on the boundaries, including traditional and boundary-optimized SBP operators.
\end{remark}

To make the presentation of the method easier to follow, I will consider the two-block domain $\Omega = \Omega^{(l)} \cup \Omega^{(r)}$ in Figure \ref{fig: mb_example} as an illustrating example. It should be clear from this example how to use the method on more complicated domain decompositions. I will use superscripts $(l)$ and $(r)$ to indicate operators and vectors in the left and right blocks, respectively. Discretize $\Omega^{(l)}$ and $\Omega^{(r)}$ according to Section \ref{subsec: two_dim_ops} and let
\begin{equation}
	H^+_\Omega = \begin{bmatrix}
		H_{\Omega^{(l)}} & 0 \\ 0 &H_{\Omega^{(r)}}
	\end{bmatrix},
\end{equation}
define a global inner product matrix, where the ''$+$'' sign is used to indicate that $H^+_\Omega$ is a non-reduced operator, i.e., it is defined for solution vectors with duplicated points. Similarly, let
\begin{equation}
	D_L^+ = \begin{bmatrix}
			D_L^{(l)} & 0 \\ 0 & D_L^{(r)}
	\end{bmatrix},
\end{equation}
define the non-reduced Laplace operator and
\begin{equation}
	D_x^+ = \begin{bmatrix}
			D_x^{(l)} & 0 \\ 0 & D_x^{(r)}
	\end{bmatrix} \quad \text{and} \quad 
	D_y^+ = \begin{bmatrix}
			D_y^{(l)} & 0 \\ 0 & D_y^{(r)}
	\end{bmatrix},
\end{equation}
$x$- and $y$-derivative operators. For later use, the physical boundary is split into three parts $\partial \Omega_1$, $\partial \Omega_2$, and $\partial \Omega_3$, see Figure \ref{fig: mb_example}, with the following boundary operators:
\begin{equation}
	\label{eq: disc_example_boundary_ops}
	\begin{alignedat}{5}
		&e_{\partial \Omega_1} = 
		\begin{bmatrix}
			e_n^{(l)} \\
			e_n^{(r)}
		\end{bmatrix} E, \quad 
		&&d_{\partial \Omega_1} = 
		\begin{bmatrix}
			d_n^{(l)} \\
			d_n^{(r)}
		\end{bmatrix} E, \quad 
		&&H_{\partial \Omega_1}= 
		\begin{bmatrix}
			H_n^{(l)} & 0 \\
			0 & H_n^{(r)}
		\end{bmatrix}, \\
		&e_{\partial \Omega_2} = 
		\begin{bmatrix}
			e_s^{(l)} \\
			e_s^{(r)}
		\end{bmatrix} E, 
		&&d_{\partial \Omega_2} = 
		\begin{bmatrix}
			d_s^{(l)} \\
			d_s^{(r)}
		\end{bmatrix} E, 
		&&H_{\partial \Omega_2} = 
		\begin{bmatrix}
			H_s^{(l)} & 0 \\
			0 & H_s^{(r)}
		\end{bmatrix}, \\
		&e_{\partial \Omega_3} = 
		\begin{bmatrix}
			e_w^{(l)} \\
			e_e^{(r)}
		\end{bmatrix} E, 
		&&d_{\partial \Omega_3} = 
		\begin{bmatrix}
			d_w^{(l)} \\
			d_e^{(r)}
		\end{bmatrix} E, 
		&&H_{\partial \Omega_3} = 
		\begin{bmatrix}
			H_w^{(l)} & 0 \\
			0 & H_e^{(r)}
		\end{bmatrix}
	\end{alignedat}
\end{equation}
Finally, the following non-reduced modified Laplace operator is defined:
\begin{equation}
	\tilde{D}_L^+ = D_L^+ - (H^+_\Omega)^{-1}
		\begin{bmatrix}
			e^{(l)\top}_e H^{(l)}_e d^{(l)}_e & e^{(l)\top}_e H^{(l)}_e d^{(r)}_w \\ 0 & 0
		\end{bmatrix},
\end{equation}
where continuity of the normal derivative across the interface is imposed using the SAT method. 
\begin{remark}
	\label{rmrk: bound_integral_eq}
	Note that if the metric coefficients \eqref{eq: disc_metric_dervs} in the left and right blocks are computed using the same SBP operators on the reference domain, the boundary integrals $H^{(l)}_e$ and $H^{(r)}_w$ are equal. This holds because the coordinates of the grid points on the interface are the same and only tangential derivatives on the reference domain are used when computing the scaling factors $W_2^{(l)}$ and $W_2^{(r)}$.
\end{remark}
\begin{figure}
	\center
	\includegraphics[width=0.9\textwidth]{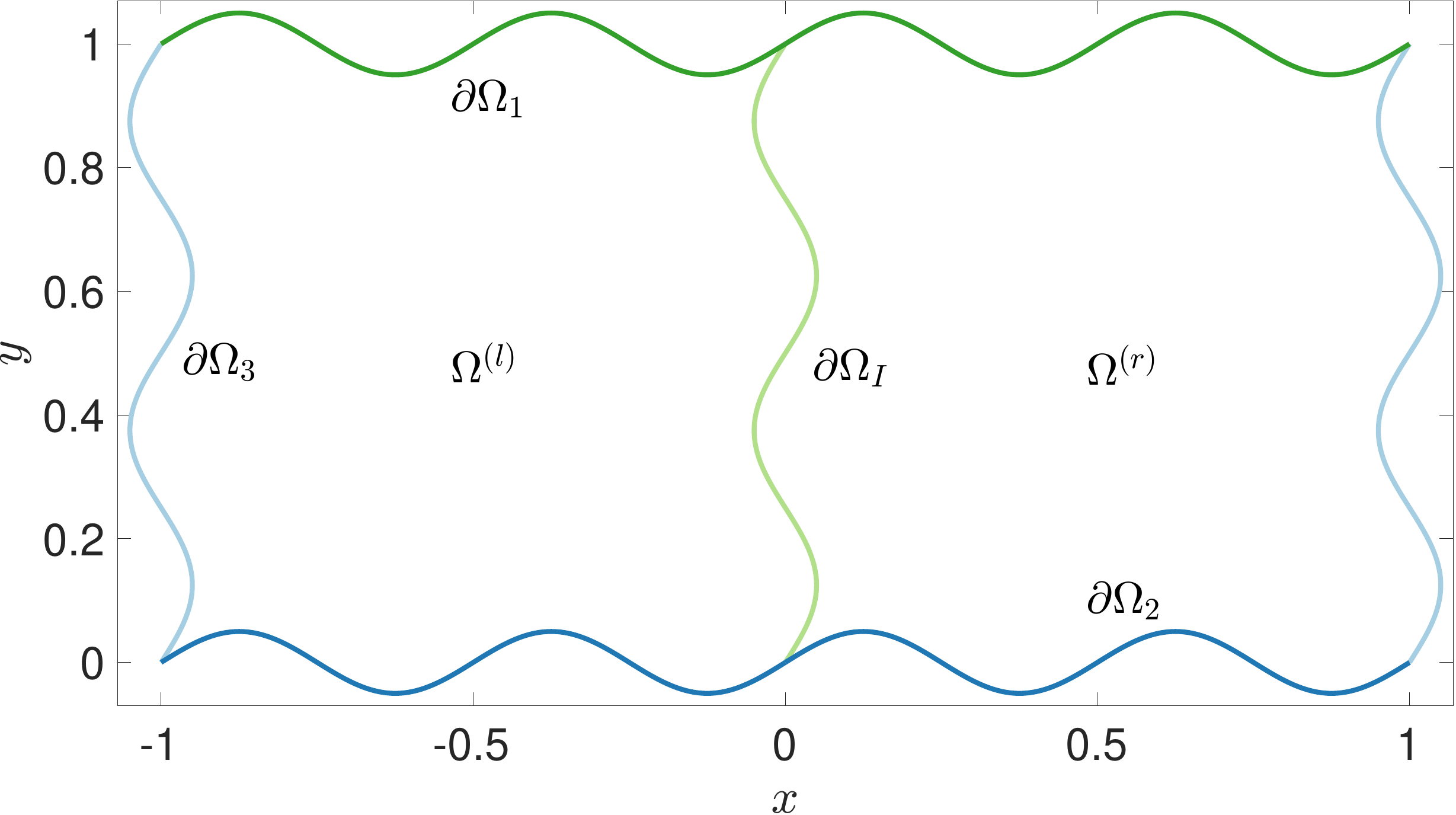}
	\caption{Two-block curvilinear example domain. The boundaries are color-coded as follows: the interface $\partial \Omega_I$ is light green, the north boundaries $\partial \Omega_1$ are dark green, the south boundaries $\partial \Omega_2$ are dark blue, and the side boundaries $\partial \Omega_3$ are light blue.}
	\label{fig: mb_example}
\end{figure}
The following lemma is one of the main results of the present study:
\begin{lemma}
	\label{lemma: lapl_sbp_prop}
	The reduced Laplace operator
	\begin{equation}
		\label{eq: red_Laplace_op}
		D_L = H_\Omega^{-1} E^T H_\Omega^+ \tilde{D}_L^+ E,
	\end{equation}
	where
	\begin{equation}
		\label{eq: red_inner_prod}
		H_\Omega = E^\top H^+_\Omega E,
	\end{equation}
	satisfies the SBP property
	\begin{equation}
		\label{eq: disc_laplace_sbp_embed}
		\begin{alignedat}{2}
			(\bv{u},D_L \bv{v})_{H_\Omega} &= - (D^+_x E \bv{u}, D^+_x E \bv{v})_{H^+_\Omega} - (D^+_y E \bv{u}, D^+_y E \bv{v})_{H^+_\Omega} \\
			&+ \langle e_{\partial \Omega} \bv{u}, d_{\partial \Omega} \bv{v} \rangle_{H_{\partial \Omega}}, \quad \forall \bv{u},\bv{v} \in \mathbb{R}^{\hat N},
		\end{alignedat}
	\end{equation}
	where the boundary terms are given by
	\begin{equation}
		\begin{alignedat}{2}
			 \langle e_{\partial \Omega} \bv{u}, d_{\partial \Omega} \bv{v} \rangle_{H_{\partial \Omega}} &= \sum_{i = 1,2,3} \langle e_{\partial \Omega_i} \bv{u}, d_{\partial \Omega_i} \bv{v} \rangle_{H_{\partial \Omega_i}}.
		\end{alignedat}
	\end{equation}
\end{lemma}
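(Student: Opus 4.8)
The plan is to reduce the reduced-operator identity \eqref{eq: disc_laplace_sbp_embed} to the single-block identity \eqref{eq: disc_laplace_sbp} by expanding everything through the embedding operator $E$, and then to show that the SAT term built into $\tilde D_L^+$, together with the strong continuity enforced by $E$, makes the spurious interface contributions cancel exactly. I would carry out the argument on the two-block domain of Figure \ref{fig: mb_example}; the general case follows by repeating it at every interface.

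First I would rewrite the left-hand side. Using the definition \eqref{eq: red_Laplace_op}, the symmetry of $H_\Omega$, and $H_\Omega H_\Omega^{-1} = I$, I obtain
\[
(\bv{u},D_L\bv{v})_{H_\Omega} = \bv{u}^\top E^\top H^+_\Omega \tilde D_L^+ E\bv{v} = (E\bv{u},\, \tilde D_L^+ E\bv{v})_{H^+_\Omega}.
\]
Writing $\tilde{\bv{u}} = E\bv{u}$ and $\tilde{\bv{v}} = E\bv{v}$ and substituting the definition of $\tilde D_L^+$, the factor $H^+_\Omega (H^+_\Omega)^{-1}$ collapses to the identity, so the expression splits into a volume part $(\tilde{\bv{u}}, D_L^+\tilde{\bv{v}})_{H^+_\Omega}$ minus the quadratic SAT form $\tilde{\bv{u}}^\top \bigl[\begin{smallmatrix} e^{(l)\top}_e H^{(l)}_e d^{(l)}_e & e^{(l)\top}_e H^{(l)}_e d^{(r)}_w \\ 0 & 0 \end{smallmatrix}\bigr] \tilde{\bv{v}}$. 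Since $D_L^+$ and $H^+_\Omega$ are block diagonal, applying the single-block property \eqref{eq: disc_laplace_sbp} to each block turns the volume part into the two derivative terms $-(D^+_x\tilde{\bv{u}}, D^+_x\tilde{\bv{v}})_{H^+_\Omega} - (D^+_y\tilde{\bv{u}}, D^+_y\tilde{\bv{v}})_{H^+_\Omega}$, which (after $\tilde{\bv{u}} = E\bv{u}$) are exactly the first two terms on the right of \eqref{eq: disc_laplace_sbp_embed}, plus all eight one-sided boundary terms. The north, south, west-of-left and east-of-right terms reassemble precisely into $\sum_{i=1,2,3}\langle e_{\partial\Omega_i}\bv{u}, d_{\partial\Omega_i}\bv{v}\rangle_{H_{\partial\Omega_i}}$ via the definitions \eqref{eq: disc_example_boundary_ops}.

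The main obstacle, and the heart of the proof, is the interface. There remain two one-sided interface terms, $\langle e_e^{(l)}\tilde{\bv{u}}^{(l)}, d_e^{(l)}\tilde{\bv{v}}^{(l)}\rangle_{H_e^{(l)}}$ and $\langle e_w^{(r)}\tilde{\bv{u}}^{(r)}, d_w^{(r)}\tilde{\bv{v}}^{(r)}\rangle_{H_w^{(r)}}$, while the SAT form expands into $\langle e_e^{(l)}\tilde{\bv{u}}^{(l)}, d_e^{(l)}\tilde{\bv{v}}^{(l)}\rangle_{H_e^{(l)}} + \langle e_e^{(l)}\tilde{\bv{u}}^{(l)}, d_w^{(r)}\tilde{\bv{v}}^{(r)}\rangle_{H_e^{(l)}}$. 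Subtracting, the first interface term cancels the first SAT term directly, leaving
\[
\langle e_w^{(r)}\tilde{\bv{u}}^{(r)}, d_w^{(r)}\tilde{\bv{v}}^{(r)}\rangle_{H_w^{(r)}} - \langle e_e^{(l)}\tilde{\bv{u}}^{(l)}, d_w^{(r)}\tilde{\bv{v}}^{(r)}\rangle_{H_e^{(l)}}.
\]
To close the argument I would invoke two facts: the equality of interface quadratures $H_e^{(l)} = H_w^{(r)}$ from Remark \ref{rmrk: bound_integral_eq}, and the strong continuity enforced by the embedding, $e_e^{(l)} E\bv{u} = e_w^{(r)} E\bv{u}$, which holds because $\bv{S} = E\hat{\bv{S}}$ forces the duplicated interface rows of $\tilde{\bv{u}}$ to carry a single shared value. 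With these two identities the two remaining terms coincide and cancel, so only the physical boundary terms survive and \eqref{eq: disc_laplace_sbp_embed} follows. The delicate point to check carefully is precisely this final cancellation: that the surviving cross term matches the surviving one-sided term under both the quadrature equality and the embedding-induced continuity, since it is here that the choice of the asymmetric SAT block and the geometric consistency of the two blocks are genuinely used.
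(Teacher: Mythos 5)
Your proposal is correct and follows essentially the same route as the paper's proof: collapse $H_\Omega H_\Omega^{-1}$ to pass to the non-reduced inner product, apply the single-block identity \eqref{eq: disc_laplace_sbp} blockwise, and cancel the interface terms against the SAT using the embedding-induced continuity $e_e^{(l)}E\bv{u} = e_w^{(r)}E\bv{u}$ together with $H_e^{(l)} = H_w^{(r)}$ from Remark \ref{rmrk: bound_integral_eq}. Your bookkeeping of exactly which SAT term cancels which one-sided interface term is slightly more explicit than the paper's, but the argument is the same.
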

\begin{proof}
	We have
	\begin{equation}
		\label{eq: disc_lapl_sbpprop_1}
		(\bv{u},D_L \bv{v})_{H_\Omega} = (\bv{u}^+, \tilde{D}_L^+ \bv{v}^+)_{H^+_\Omega},
	\end{equation}
	where $\bv{u}^+ = E \bv{u} = \begin{bmatrix} \bv{u}^{(l)} \\ \bv{u}^{(r)} \end{bmatrix}$ and $\bv{v}^+ = E \bv{v} = \begin{bmatrix}
	\bv{v}^{(l)} \\ \bv{v}^{(r)} \end{bmatrix}$. Note that 
	\begin{equation}
		\label{eq: disc_lapl_sbpprop_2}
		e_e^{(l)} \bv{u}^{(l)} = e_w^{(r)} \bv{u}^{(r)} \quad \text{and} \quad e_e^{(l)} \bv{v}^{(l)} = e_w^{(r)} \bv{v}^{(r)},
	\end{equation}
	hold exactly due to the specific construction of $E$ (since it duplicates the values from one side of the interface to the other). Using \eqref{eq: disc_laplace_sbp} in \eqref{eq: disc_lapl_sbpprop_1}, we get
	\begin{equation}
		\begin{alignedat}{2}
		(\bv{u},D_L \bv{v})_{H_\Omega} &= - (D^{(l)}_x \bv{u}^{(l)}, D^{(l)}_x \bv{v}^{(l)})_{H_{\Omega^{(l)}}} - (D^{(l)}_y \bv{u}^{(l)}, D^{(l)}_y \bv{v}^{(l)})_{H_{\Omega^{(l)}}} \\
		&- (D^{(r)}_x \bv{u}^{(r)}, D^{(r)}_x \bv{v}^{(r)})_{H_{\Omega^{(r)}}} - (D^{(r)}_y \bv{u}^{(r)}, D^{(r)}_y \bv{v}^{(r)})_{H_{\Omega^{(r)}}} \\
		&+ \sum_{k = w,e,s,n} \langle e_k^{(l)} \bv{u}^{(l)}, d_k^{(l)} \bv{v}^{(l)} \rangle_{H^{(l)}_k} + \langle e_k^{(r)} \bv{u}^{(r)}, d_k^{(r)} \bv{v}^{(r)} \rangle_{H^{(r)}_k} \\
		&- \langle e_e^{(l)} \bv{u}^{(l)}, d_e^{(l)} \bv{v}^{(l)} + d_w^{(r)} \bv{v}^{(r)} \rangle_{H^{(l)}_e},
		\end{alignedat}
	\end{equation}
	where the first three rows come from $D_L^{(l)}$ and $D_L^{(r)}$ and the final row from the SAT. Expanding the sums over the boundaries, using \eqref{eq: disc_lapl_sbpprop_2}, and using that $H^{(l)}_e = H^{(r)}_w$ (see Remark \ref{rmrk: bound_integral_eq}) cancels the interface terms, resulting in
	\begin{equation}
		\begin{alignedat}{2}
		(\bv{u},D_L \bv{v})_{H_\Omega} &= - (D^{(l)}_x \bv{u}^{(l)}, D^{(l)}_x \bv{v}^{(l)})_{H_{\Omega^{(l)}}} - (D^{(l)}_y \bv{u}^{(l)}, D^{(l)}_y \bv{v}^{(l)})_{H_{\Omega^{(l)}}} \\
		&- (D^{(r)}_x \bv{u}^{(r)}, D^{(r)}_x \bv{v}^{(r)})_{H_{\Omega^{(r)}}} - (D^{(r)}_y \bv{u}^{(r)}, D^{(r)}_y \bv{v}^{(r)})_{H_{\Omega^{(r)}}} \\
		&+ \sum_{k = w,s,n} \langle e_k^{(l)} \bv{u}^{(l)}, d_k^{(l)} \bv{v}^{(l)} \rangle_{H^{(l)}_k} + \sum_{k = e,s,n} \langle e_k^{(r)} \bv{u}^{(r)}, d_k^{(l)} \bv{v}^{(r)} \rangle_{H^{(r)}_k}.
		\end{alignedat}
	\end{equation}
	Combining the terms leads to
	\begin{equation}
		\label{eq: disc_laplace_sbp_embed_proof}
		\begin{alignedat}{2}
			(\bv{u},D_L \bv{v})_{H_\Omega} &= - (D^+_x E \bv{u}, D^+_x E \bv{v})_{H_\Omega^+} - (D^+_y E \bv{u}, D^+_y E \bv{v})_{H_\Omega^+} \\
			&+ \langle e_{\partial \Omega} \bv{u}, d_{\partial \Omega} \bv{v} \rangle_{H_{\partial \Omega}}, \quad \forall \bv{u},\bv{v} \in \mathbb{R}^{\hat N},
		\end{alignedat}
	\end{equation}
	which concludes the proof.
\end{proof}
Although Lemma \ref{lemma: lapl_sbp_prop} proves the SBP property of the reduced Laplace operator for the simple two-block domain in Figure \ref{fig: mb_example}, the proof extends directly to more complicated domains. The steps to create the reduced Laplace operator can be summarized as follows:
\begin{enumerate}
	\item Decide on an ordering of the blocks, stack the coordinates (create $S$), and find a matrix $E$ satisfying \eqref{eq: disc_E_prop}.
	\item Construct the non-reduced inner product matrix $H_\Omega^+$, Laplace operator $D_L^+$, and corresponding boundary operators $e_{\partial \Omega}$, $d_{\partial \Omega}$, and $H_{\partial \Omega}$.
	\item Construct the modified Laplace operator by adding SATs imposing continuity of the fluxes across the interfaces.
	\item Construct the reduced inner product operator $H_\Omega$ according to \eqref{eq: red_inner_prod}.
	\item Construct the reduced Laplace operator $D_L$ according to \eqref{eq: red_Laplace_op}.
\end{enumerate}
The resulting Laplace operator is defined on a grid without duplicated points, with continuity of the solution imposed using the continuous SBP method and continuity of the normal derivatives imposed using the SAT method, that satisfies the following discrete equivalent to Green's first identity \eqref{eq: greens_first}:
\begin{equation}
	\label{eq: disc_laplace_sbp_embed_act}
	\begin{alignedat}{2}
		(\bv{u},D_L \bv{v})_{H_\Omega} &= - (D^+_x E \bv{u}, D^+_x E \bv{v})_{H_\Omega^+} - (D^+_y E \bv{u}, D^+_y E \bv{v})_{H_\Omega^+} \\
		&+ \langle e_{\partial \Omega} \bv{u}, d_{\partial \Omega} \bv{v} \rangle_{H_{\partial \Omega}}, \quad \forall \bv{u},\bv{v} \in \mathbb{R}^{\hat N},
	\end{alignedat}
\end{equation}
where $e_{\partial \Omega}$ extracts the solution, $d_{\partial \Omega}$ approximates the normal derivative, and $H_{\partial \Omega}$ is the boundary quadrature, all along the physical boundary $\partial \Omega$.
\section{The acoustic wave equation}
\label{sec: aco_wave_eq}
As a model problem to evaluate the new method I consider the acoustic wave equation with wave speed $c$ on $\Omega \subset \mathbb{R}^2$ given by,
\begin{equation}
	\label{eq: cont_wave_eq}
	\begin{alignedat}{5}
		&u_{tt} = c^2 \Delta u + \delta(x-x_s,y-y_s) f(t), \quad &&x,y \in \Omega, &&t > 0, \\
		&u = 0, &&x,y \in \partial \Omega_1, \quad &&t > 0, \\
		&n \cdot \nabla u = 0, &&x,y \in \partial \Omega_2, &&t > 0, \\
		&u_t + c n \cdot \nabla u = 0, &&x,y \in \partial \Omega_3, &&t > 0, \\
		&u = u_t = 0, &&x,y \in \Omega, &&t = 0, \\
	\end{alignedat}
\end{equation}
with Dirichlet, Neumann, and first-order outflow boundary conditions \cite{engquist} on $\partial \Omega_1$, $\partial \Omega_2$, and $\partial \Omega_3$, respectively. Here $\partial \Omega = \partial \Omega_1 \cup \partial \Omega_2 \cup \partial \Omega_3$ is the boundary of $\Omega$. The term $\delta(x-x_s,y-y_s) f(t)$ corresponds to a point source at $(x_s,y_s)$, where the function $f(t)$ is the time-dependent component. Multiplying \eqref{eq: cont_wave_eq} by $u_t$ and integrating over the domain results in
\begin{equation}
	(u_t, u_{tt})_\Omega = (u_t,c^2\Delta u + \delta(x-x_s,y-y_s) f(t))_\Omega.
\end{equation}
Using Green's first identity \eqref{eq: greens_first} and only considering the energy evolution of the homogeneous problem, i.e. $f(t) = 0$, gives
\begin{equation}
	\label{eq: cont_wave_eneq}
	\frac{d}{dt} \left ( \snorm{u_t}_\Omega + c^2 \snorm{\nabla \cdot u}_\Omega \right ) = 2 c^2 \langle u_t, \bv{n} \cdot \nabla u \rangle_{\partial \Omega} = -2c \snorm{u_t}_{\partial \Omega_3} \leq 0, 
\end{equation}
where 
\begin{equation}
	\snorm{u}_\Omega = (u,u)_\Omega \quad \text{and} \quad \snorm{u}_{\partial \Omega} = \langle u, u \rangle_{\partial \Omega}.
\end{equation}
The energy balance equation \eqref{eq: cont_wave_eneq} shows that the energy in the system given by $\snorm{u_t}_\Omega + c^2 \snorm{\nabla \cdot u}_\Omega$ can not grow (except for the contribution of the source term), and that it will dissipate through the outflow boundary $\partial \Omega_3$.

In this work, I will use the projection method to impose the Dirichlet boundary condition and the SAT method to impose the Neumann and outflow boundary conditions. These methods have been used previously to impose boundary conditions for the acoustic wave equation, in for example \cite{Mattsson2009,Mattsson2018}, but are here presented for completeness. 

Once again, for clarity, I present the discretization of these boundary conditions for the example two-block domain in Figure \ref{fig: mb_example}, where $\partial \Omega_1 = \partial \Omega^{(l)}_{n} \cup \partial \Omega^{(r)}_{n}$ (northern boundary), $\partial \Omega_2 = \partial \Omega^{(l)}_{s} \cup \partial \Omega^{(r)}_{s}$ (southern boundary), and $\partial \Omega_3 = \partial \Omega^{(l)}_{w} \cup \partial \Omega^{(r)}_{e}$ (side boundaries). Discretizing in space and leaving time continuous, the following semi-discrete equations are obtained:
\begin{equation}
	\label{eq: disc_wave_eq}
	\begin{alignedat}{2}
		&\bv{v}_{tt} = A v + B v_t + \bv{F}(t), \quad &&t > 0, \\
		&\bv{v} = \bv{v}_t = 0, &&t = 0, \\
	\end{alignedat}
\end{equation}
where
\begin{equation}
	\begin{alignedat}{2}
		A &= c^2 P (D_L + SAT_{\partial \Omega_2} + SAT^{(A)}_{\partial \Omega_3}) P, \\
		B &= c P SAT^{(B)}_{\partial \Omega_3} P, \\
		\bv{F}(t) &= P \bv{d}_s f(t).
	\end{alignedat}
\end{equation}
The SATs are given by
\begin{equation}
	\label{eq: disc_wave_bc_sats}
	\begin{alignedat}{2}
		SAT_{\partial \Omega_2} &= - H_\Omega^{-1} e_{\partial \Omega_2}^\top H_{\partial \Omega_2} d_{\partial \Omega_2} , \\
		SAT^{(A)}_{\partial \Omega_3} &= - H_\Omega^{-1} e_{\partial \Omega_3}^\top H_{\partial \Omega_3} d_{\partial \Omega_3} , \\
		SAT^{(B)}_{\partial \Omega_3} &= - H_\Omega^{-1} e_{\partial \Omega_3}^\top H_{\partial \Omega_3} e_{\partial \Omega_3}, 
	\end{alignedat}
\end{equation}
and the projection operator by
\begin{equation}
	P = I_{\hat N} - H_\Omega^{-1} L^\top (L H_\Omega^{-1} L^\top)^{-1} L,
\end{equation}
where $I_{\hat N}$ denotes the $\hat N \times \hat N$ identity matrix and
\begin{equation}
	\label{eq: disc_wave_bc_L}
	L = e_{\partial \Omega_1}.
\end{equation}
The boundary operators in \eqref{eq: disc_wave_bc_sats} and \eqref{eq: disc_wave_bc_L} are given in \eqref{eq: disc_example_boundary_ops}. The discretization of the Dirac delta function $\delta(x-x_s,y-y_s)$ is done by ensuring that $(x_s,y_s)$ lies in a grid point and setting
\begin{equation}
	(d_s)_i = 
	\begin{cases}
		(H^{-1}_\Omega)_{i,i}, \quad &\text{if } x_i = x_s \text{ and } y_i = y_s, \\
		0, &\text{otherwise},
	\end{cases}
\end{equation}
where $(H^{-1}_\Omega)_{i,i}$ is the $i$:th diagonal entry of $H^{-1}_\Omega$. There exist discretizations for arbitrarily located point sources, see for example \cite{PETERSSON2016532}, but this is not the focus of the present work and is considered out of scope.
\begin{lemma}
	The ODE system \eqref{eq: disc_wave_eq} is stable.
\end{lemma}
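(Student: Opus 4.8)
The plan is to use the energy method, which is the discrete analogue of the continuous argument already carried out in equations \eqref{eq: cont_wave_eneq}. I would define a discrete energy and show that, for the homogeneous problem ($\bv{F}(t) = 0$), its time derivative is non-positive, which establishes stability. The natural candidate for the energy is the discrete counterpart of $\snorm{u_t}_\Omega + c^2 \snorm{\nabla \cdot u}_\Omega$, namely
\begin{equation}
	\mathcal{E} = \snorm{\bv{v}_t}_{H_\Omega} + c^2 \left( \snorm{D_x^+ E \bv{v}}_{H_\Omega^+} + \snorm{D_y^+ E \bv{v}}_{H_\Omega^+} \right).
\end{equation}
I expect the gradient terms to come directly from the right-hand side of the discrete Green's identity \eqref{eq: disc_laplace_sbp_embed_act}, so the energy should be defined precisely so that differentiating it and using the semi-discrete equation produces cancellations that mirror the continuous case.

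First I would differentiate $\mathcal{E}$ in time and substitute $\bv{v}_{tt} = A\bv{v} + B\bv{v}_t$ (with $\bv{F}=0$). The key algebraic fact is that the projection operator $P$ is symmetric and idempotent with respect to $H_\Omega$, and that $P\bv{v} = \bv{v}$ holds for the solution since the projection enforces $L\bv{v}=0$ (the Dirichlet condition on $\partial\Omega_1$). I would use these properties to move $P$ around freely and to discard boundary terms on $\partial\Omega_1$: because $e_{\partial\Omega_1}P = 0$, any boundary contribution from the Dirichlet boundary vanishes. Then, invoking \eqref{eq: disc_laplace_sbp_embed_act} to rewrite $(\bv{v}_t, D_L\bv{v})_{H_\Omega}$, the volume gradient terms should exactly cancel the time derivative of the $c^2$ part of $\mathcal{E}$, leaving only boundary terms on $\partial\Omega_2$ and $\partial\Omega_3$.

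Next I would handle the remaining boundary terms. On the Neumann boundary $\partial\Omega_2$, the $SAT_{\partial\Omega_2}$ term is designed precisely to cancel the boundary integral $\langle e_{\partial\Omega_2}\bv{v}_t, d_{\partial\Omega_2}\bv{v}\rangle_{H_{\partial\Omega_2}}$ arising from Green's identity, so those contributions should annihilate each other. On the outflow boundary $\partial\Omega_3$, the $SAT^{(A)}_{\partial\Omega_3}$ term cancels the normal-derivative boundary integral, while the $SAT^{(B)}_{\partial\Omega_3}$ term, which is proportional to $-c\, e_{\partial\Omega_3}^\top H_{\partial\Omega_3} e_{\partial\Omega_3}$, contributes a manifestly non-positive term of the form $-2c\snorm{e_{\partial\Omega_3}\bv{v}_t}_{H_{\partial\Omega_3}}$, exactly reproducing the continuous dissipation $-2c\snorm{u_t}_{\partial\Omega_3}$ in \eqref{eq: cont_wave_eneq}. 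Assembling these pieces gives $\frac{d}{dt}\mathcal{E} \leq 0$.

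\textbf{The main obstacle} I anticipate is the careful bookkeeping of the projection operator $P$ interleaved with the SAT and Laplace operators in $A$ and $B$. Since $A = c^2 P(\cdots)P$ and $B = cP(\cdots)P$, every inner product must be manipulated so that the $P$'s act on the correct factors; this requires systematically using the $H_\Omega$-self-adjointness and idempotency of $P$, together with $P\bv{v}=\bv{v}$, to strip the projections before applying \eqref{eq: disc_laplace_sbp_embed_act}. A secondary technical point is ensuring that the discrete Green's identity is applied with the correct argument pairing (the time derivative $\bv{v}_t$ against $\bv{v}$), and verifying that the symmetry of $D_L$ in the $H_\Omega$ inner product — itself a consequence of \eqref{eq: disc_laplace_sbp_embed_act} — lets the volume terms match the derivative of the energy. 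Once the projection algebra is under control, the cancellations are essentially forced by the construction of the SATs, so I expect the remainder to be routine.
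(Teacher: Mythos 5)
Your proposal is correct and follows essentially the same route as the paper: the energy method applied to the homogeneous system, the discrete Green's identity \eqref{eq: disc_laplace_sbp_embed_act} producing the gradient terms of the energy, the SATs cancelling the Neumann and outflow normal-derivative boundary integrals, the projection annihilating the Dirichlet contribution via $e_{\partial\Omega_1}P = 0$, and $SAT^{(B)}_{\partial\Omega_3}$ supplying the dissipation $-2c\snorm{e_{\partial\Omega_3}\bv{v}_t}_{H_{\partial\Omega_3}}$. The only minor difference is that the paper keeps $\tilde{\bv{v}} = P\bv{v}$ inside the gradient part of the energy rather than invoking $P\bv{v}=\bv{v}$, which makes the estimate hold without appealing to the initial data being compatible with the Dirichlet condition.
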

\begin{proof}
	I prove stability using the energy method, for which it is sufficient to consider the homogeneous problem, i.e., $\bv{F}(t) = 0$. Taking the inner product $(\cdot,\cdot)_H$ between $\bv{v}_t$ and \eqref{eq: disc_wave_eq} results in
\begin{equation}
	(\bv{v}_t,\bv{v}_{tt})_{H_\Omega} = (\bv{v}_t,A v + B \bv{v}_t)_{H_\Omega}.
\end{equation}
Using the definitions of $A$ and $B$ results in
\begin{equation}
	\begin{alignedat}{2}
		(\bv{v}_t,\bv{v}_{tt})_{H_\Omega} &= - c^2 (D^+_x E \tilde{\bv{v}}_t, D^+_x E \tilde{\bv{v}})_{H_\Omega^+} - c^2 (D^+_y E \tilde{\bv{v}}_t, D^+_y E \tilde{\bv{v}})_{H_\Omega^+} \\
		&+ c^2 \langle e_{\partial \Omega_1} \tilde{\bv{v}}_t, d_{\partial \Omega_1} \tilde{\bv{v}} \rangle_{H_{\partial \Omega_1}} - c \snorm{e_{\partial \Omega_3} \tilde{\bv{v}}_t }_{H_{\partial \Omega_3}},
	\end{alignedat}
\end{equation}
where $\tilde{\bv{v}} = P \bv{v}$. Using that $L\tilde{\bv{v}} = L P \bv{v} = 0$ and hence $e_{\partial \Omega_1} \tilde{\bv{v}}_t = 0$ (by the definition of $P$), and rearranging terms leads to the semi-discrete energy estimate
\begin{equation}
	\label{eq: disc_wave_eneq}
 	\frac{d}{dt} (\snorm{\bv{v}_t}_{H_\Omega} + \snorm{D^+_x E \tilde{\bv{v}}}_{H_\Omega^+} + \snorm{D^+_y E \tilde{\bv{v}}}_{H_\Omega^+}) = - 2 \snorm{e_{\partial \Omega_3} \tilde{\bv{v}}_t }_{H_{\partial \Omega_3}} \leq 0,
 \end{equation} 
which proves stability.
\end{proof}
Note that \eqref{eq: disc_wave_eneq} is the semi-discrete equivalent to \eqref{eq: cont_wave_eneq}.
\section{Numerical results}
\label{sec: num_res}
Two numerical experiments are done to evaluate the new method. First, the accuracy and convergence properties of the method are compared against currently available high-order finite difference methods. In the final experiment, the method is applied to solve a more physically relevant problem, demonstrating a potential use case for the method in the future. 

For both problems, the ODE \eqref{eq: disc_wave_eq} is written on first-order form and solved using the classical fourth-order Runge-Kutta method. This is an accurate and explicit scheme with a stability region that includes a part of the imaginary axis, which makes it suitable for highly accurate discretizations of the wave equation. The time step is chosen as 1/5 of the stability limit, which is sufficient to ensure that the temporal error is insignificant compared to the spatial error.
\subsection{Accuracy}
Consider a circle of radius one centered at the origin. With a point source in $(x_s,y_s)$ and temporal component
\begin{equation}
	\label{eq: cont_forcing}
	f(t) = \frac{1}{\sigma \sqrt{2 \pi}} e^\frac{{-(t - t_s)^2}}{2 \sigma^2},
\end{equation}
the solution is given by
\begin{equation}
	\label{eq: analytical_u}
 	u(x,y,t) = \frac{1}{\sigma (2 \pi)^{3/2}} \int_0^\infty e^\frac{{-(t - t_s - r(x,y) \cosh(\omega))^2}}{2 \sigma^2} \: d\omega,
\end{equation} 
where $r(x,y) = \sqrt{(x - x_s)^2 + (y - y_s)^2}$ \cite{STIERNSTROM2023112376}. Here I use $\sigma = 0.04$ and $t_s = 0.3$. Since the integrand in \eqref{eq: analytical_u} rapidly goes to zero as $\omega \rightarrow \infty$, it can be easily computed numerically up to the required tolerance. Here I use the MATLAB function \verb|integral|.

The domain is decomposed into blocks as depicted in Figure \ref{fig: circle_domain} for the traditional and boundary-optimized finite difference operators and into quadrilaterals using HOHQMesh \cite{hohqmesh} for the SBP GL operators. In Figure \ref{fig: circle_domain_hohq} the decomposition into quadrilaterals with 340 elements is shown.
\begin{figure}
\label{fig: domain_circle_both}
\begin{subfigure}{.48\textwidth}
		\center
		\includegraphics[width=0.95\textwidth]{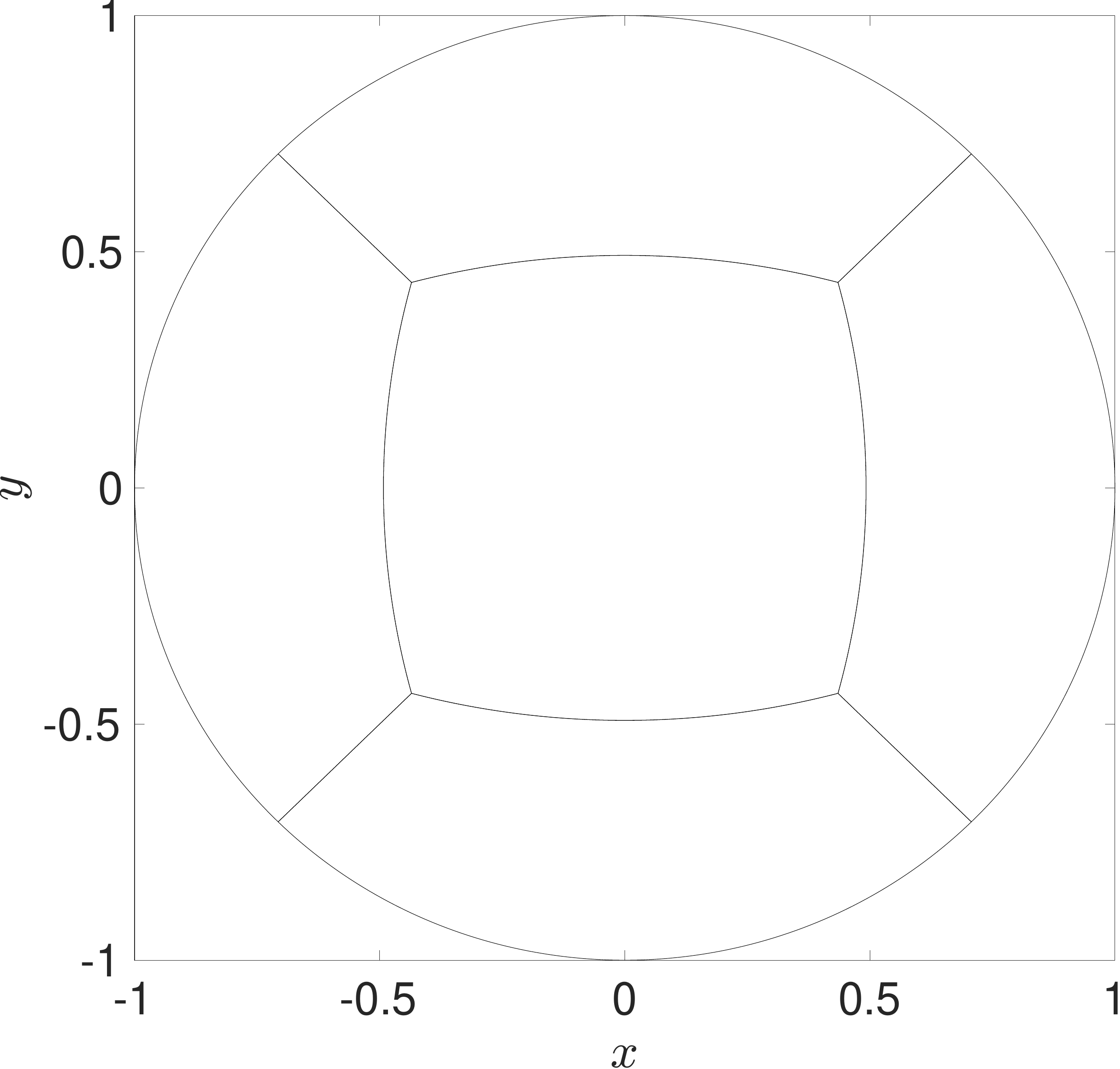}
		\caption{Domain decomposition with traditional FD operators.}
		\label{fig: circle_domain}
\end{subfigure}
\begin{subfigure}{.48\textwidth}
		\center
		\includegraphics[width=0.95\textwidth]{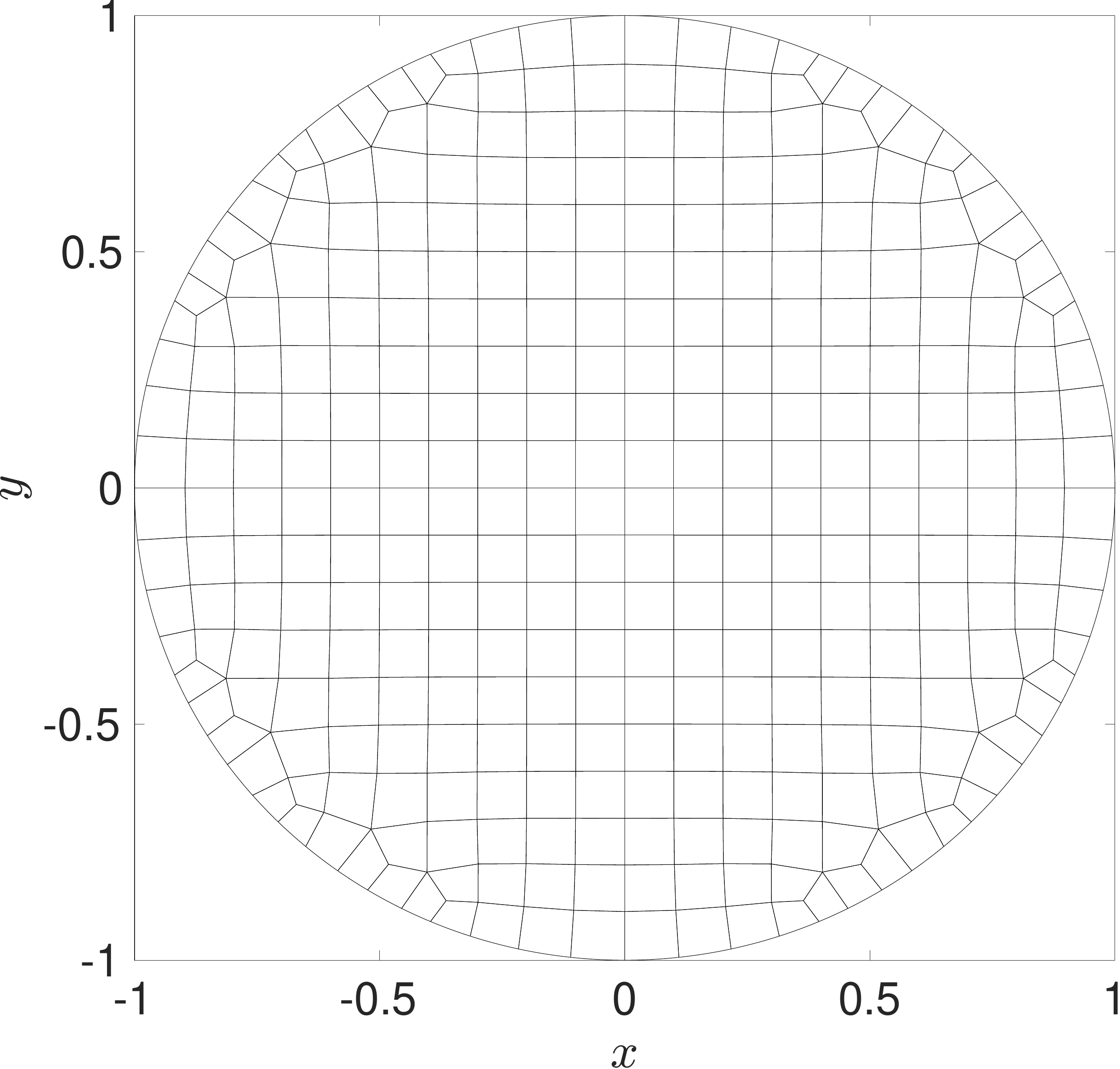}
		\caption{Domain decomposition with SBP GL operators.}
		\label{fig: circle_domain_hohq}
\end{subfigure}
\caption{Domain decompositions of the unit circle.}
\end{figure}

In Figure \ref{fig: conv_plot} the $L_2$-error at $t = 0.8$ of the 5th, 7th, and 9th order SBP GL operators on meshes with varying number of quadrilaterals (generated using HOHQMesh) is plotted against the number of DOFs. In the same plot, the $L_2$-error is plotted against DOFs with the 2nd, 4th, and 6th order traditional operators (equidistant) and the 8th, 10th, and 12th order boundary optimized operators on the block decomposition in Figure \ref{fig: circle_domain}, with interface conditions imposed using the hybrid projection and SAT method \cite{eriksson2023acoustic}. In Figure \ref{fig: eff_plot} the $L_2$-error versus runtime is plotted for the SBP GL, traditional, and boundary-optimized operators. The runtime is measured using the MATLAB commands \verb|tic| and \verb|toc| around the time stepping loop, i.e., the setup costs are not included, on a 10-core Intel Xeon E5-2630 CPU. The error results with the SBP GL operators are also presented in Table \ref{tabl: errconv_GL}, including convergence rates estimated as
\begin{equation}
	q = \frac{\log{\left(\frac{e_1}{e_2}\right)}}{\log{\left(\frac{N_2}{N_1}\right)^{1/2}}},
\end{equation}
where $e_1$ and $e_2$ are the $L_2$-errors with $N_1$ and $N_2$ DOFs.

From the results, it is clear that the SBP GL operators used together with the continuous SBP method leads to a scheme that is comparable to the current state of the art with traditional and boundary-optimized finite difference operators in terms of error for a given runtime, and even more efficient for very accurate solutions. Additionally, the results show that the estimated convergence rate with a $q$:th order SBP GL operator is at least $q+2$. 

\begin{figure}
\label{fig: conv_eff_plot}
\begin{subfigure}{.48\textwidth}
		\center
		\includegraphics[width=0.95\textwidth]{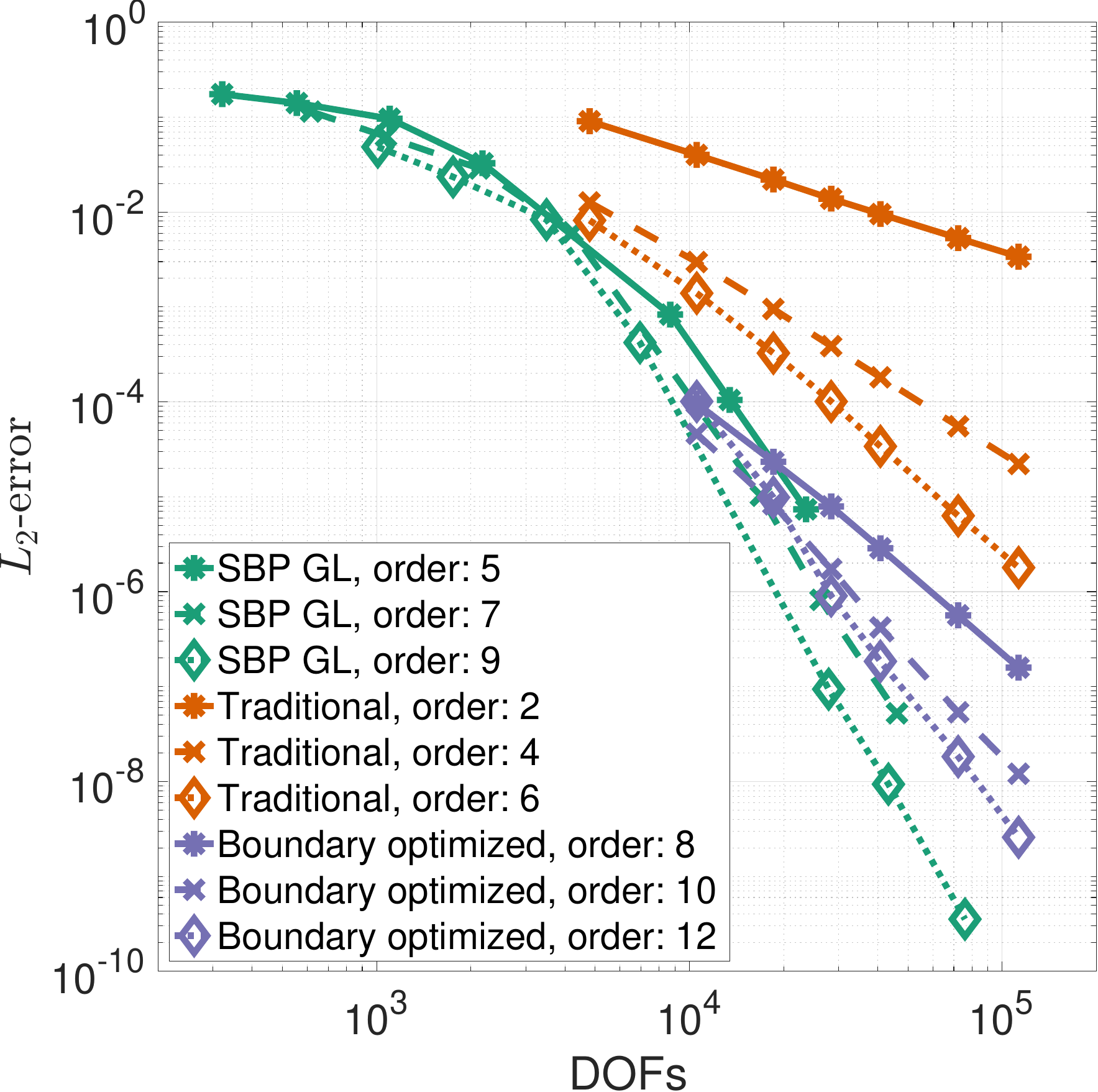}
		\caption{$L_2$-error versus degrees of freedom.}
		\label{fig: conv_plot}
\end{subfigure}
\begin{subfigure}{.48\textwidth}
		\center
		\includegraphics[width=0.95\textwidth]{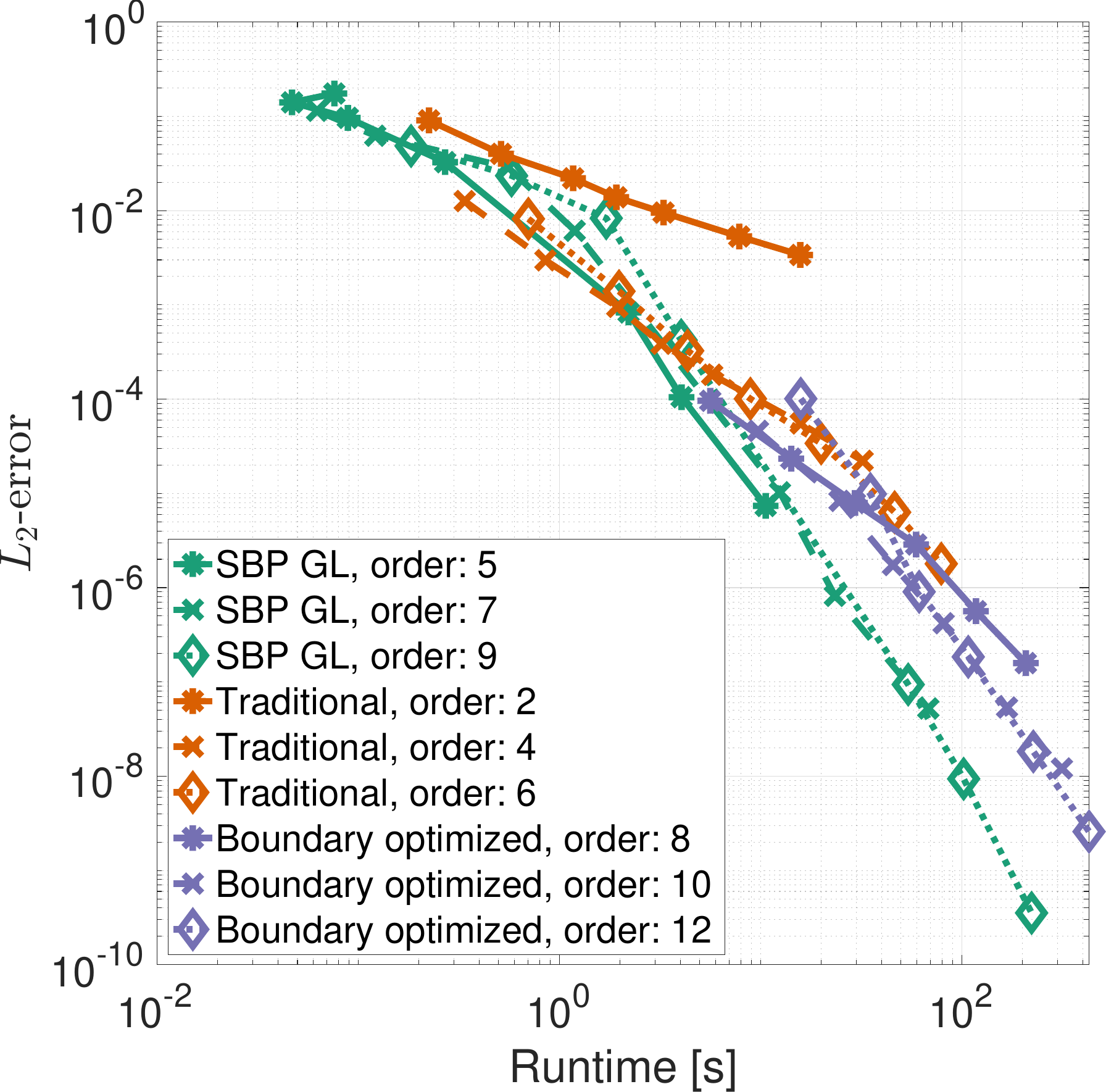}
		\caption{$L_2$-error versus runtime.}
		\label{fig: eff_plot}
\end{subfigure}
\caption{$L_2$-error versus degrees of freedom and runtime for the continuous SBP-SAT method with SBP GL operators, the SBP-P-SAT method with traditional SBP operators, and the SBP-P-SAT method with boundary optimized operators.}
\end{figure}

\begin{table}
	\centering
	\caption{$L_2$-errors (in base 10 logarithms) and convergence rates for the continuous SBP-SAT method with GL operators for varying degrees of freedom. The number of elements (blocks) is presented in the leftmost column.}
	\label{tabl: errconv_GL}
	\begin{tabular}{|c||c|c|c||c|c|c||c|c|c|} 
	\hline
	$N_{ele}$ & $N_{dofs}^{(5)}$ & $e^{(5)}$ & $q^{(5)}$ & $N_{dofs}^{(7)}$ & $e^{(7)}$ & $q^{(7)}$ & $N_{dofs}^{(9)}$ & $e^{(9)}$ & $q^{(9)}$ \\
	\hline
	12 & 321 	& -0.76 & - 	& 617 	& -0.94 & - 		& 1009 	& -1.31 & - \\
	21 & 556  	& -0.85 & 0.79 & 1072 	& -1.21 & 2.20 	& 1756 	& -1.63 & 2.62 	\\
	42 & 1101 	& -1.02 & 1.11 & 2129 	& -1.54 & 2.25 	& 3493 	& -2.08 & 3.02 	\\
	84 & 2181 	& -1.49 & 3.16 & 4229 	& -2.21 & 4.51 	& 6949 	& -3.38 & 8.67 	\\
	340 & 8681 	& -3.08 & 5.32 & 16913 & -4.99 & 9.22 	& 27865 & -7.03 & 12.11 \\
	529 & 13456 & -3.98 & 9.45 & 26244 & -6.09 & 11.51 	& 43264 & -8.03 & 10.48 \\
	931 & 23596 & -5.13 & 9.45 & 46068 & -7.28 & 9.78 	& 75988 & -9.45 & 11.63 \\
	\hline
	\end{tabular}
\end{table}
\subsection{Acoustic simulation}
To illustrate the method's potential uses, a numerical experiment with a complex geometry is done. Consider the domain in Figure \ref{fig: mb_domain}, split into two parts where the bottom part contains some complex shapes and the top part is rectangular. Dirichlet boundary conditions are imposed on $\partial \Omega_1$, Neumann boundary conditions on $\partial \Omega_2$, and first-order outflow boundary conditions on $\partial \Omega_3$. A point source is placed at $(50,50)$ with temporal component given by \eqref{eq: cont_forcing} with $\sigma = 0.005$ and $t_s = 0.03$. This problem setup could for example be used to model acoustic pressure waves propagating from a source in the water down into the seabed where solid objects of complex shapes are located. The homogeneous Dirichlet boundary condition at $\partial \Omega_1$ corresponds to constant pressure at the surface, the homogeneous Neumann boundary condition at $\partial \Omega_2$ to fully reflecting objects (rocks for example), and the outflow boundary conditions on $\partial \Omega_3$ to a truncation of the computational domain. The wave speeds are chosen to be 1500 m/s in the top block and 1800 m/s in the bottom block, corresponding roughly to the acoustic wave speeds in salt water and soil, respectively. The top part of the domain is discretized using the 6th-order traditional equidistant finite difference operators (one block) and the bottom part is decomposed into quadrilaterals (940 blocks) and discretized using the 5th-order SBP GL operators together with the continuous SBP method as described in Section \ref{sec: cont_SBP_lapl}. The block decomposition is done using HOHQMesh \cite{hohqmesh}.
\begin{figure}[h]
\begin{subfigure}{.48\textwidth}
		\center
		\includegraphics[width=0.95\textwidth]{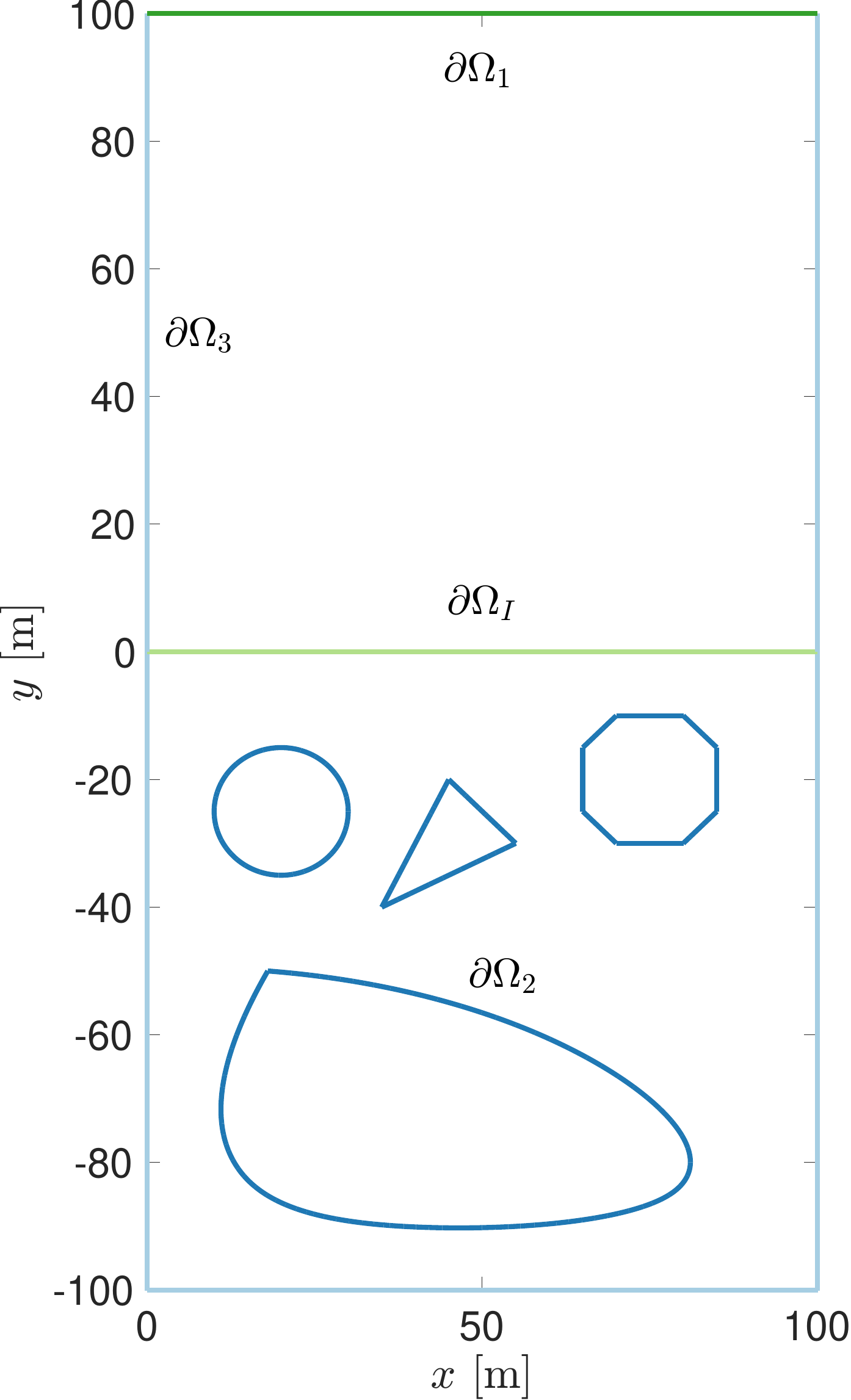}
\end{subfigure}
\begin{subfigure}{.48\textwidth}
		\center
		\includegraphics[width=0.95\textwidth]{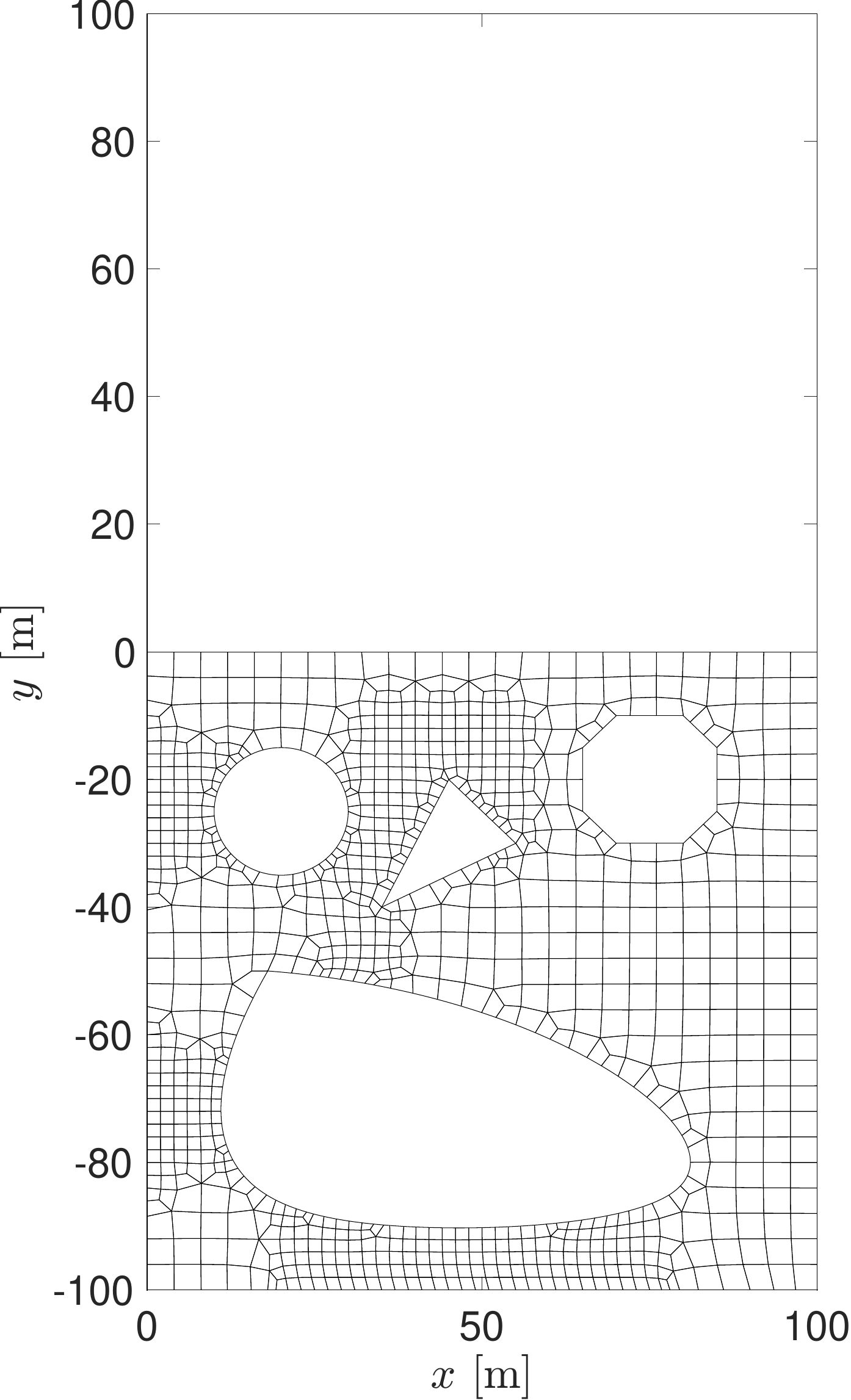}
\end{subfigure}
\caption{Boundaries (left figure) and block decomposition (right figure) of domain with complex shapes. The boundaries are color-coded in the left figure as follows: the interface $\partial \Omega_I$ is light green, the Dirichlet boundary $\partial \Omega_1$ is dark green, the Neumann boundaries $\partial \Omega_2$ are dark blue, and the outflow boundaries $\partial \Omega_3$ are light blue.}
\label{fig: mb_domain}
\end{figure}

The two parts of the domain are coupled by interface conditions at the interface $\partial \Omega_I$. Since the grid points at each side of the interface are not aligned (equidistant in the upper part and Gauss-Lobatto points in each element in the lower part), interpolation operators have to be used to impose the interface conditions. Here the glue-grid framework \cite{Kozdon2016} is used, where a series of projection operators are constructed and applied sequentially to interpolate from one side of the interface to the other. To impose the interface conditions a hybrid projection and SAT method is used, precisely as presented in \cite{Eriksson2023}. The focus of the current work is not on constructing interpolation operators or imposing non-conforming interface conditions, therefore more details on the implementation are not presented here. MATLAB code to reproduce the results is available at \url{https://github.com/guer7/sbp_embed}. Snapshots of the solution at $t = 50$ ms, $t = 75$ ms, $t = 100$ ms, and $t = 200$ ms are shown in Figure \ref{fig: mb_time_results}.
\begin{figure}
	\center
	\includegraphics[width=0.9\textwidth]{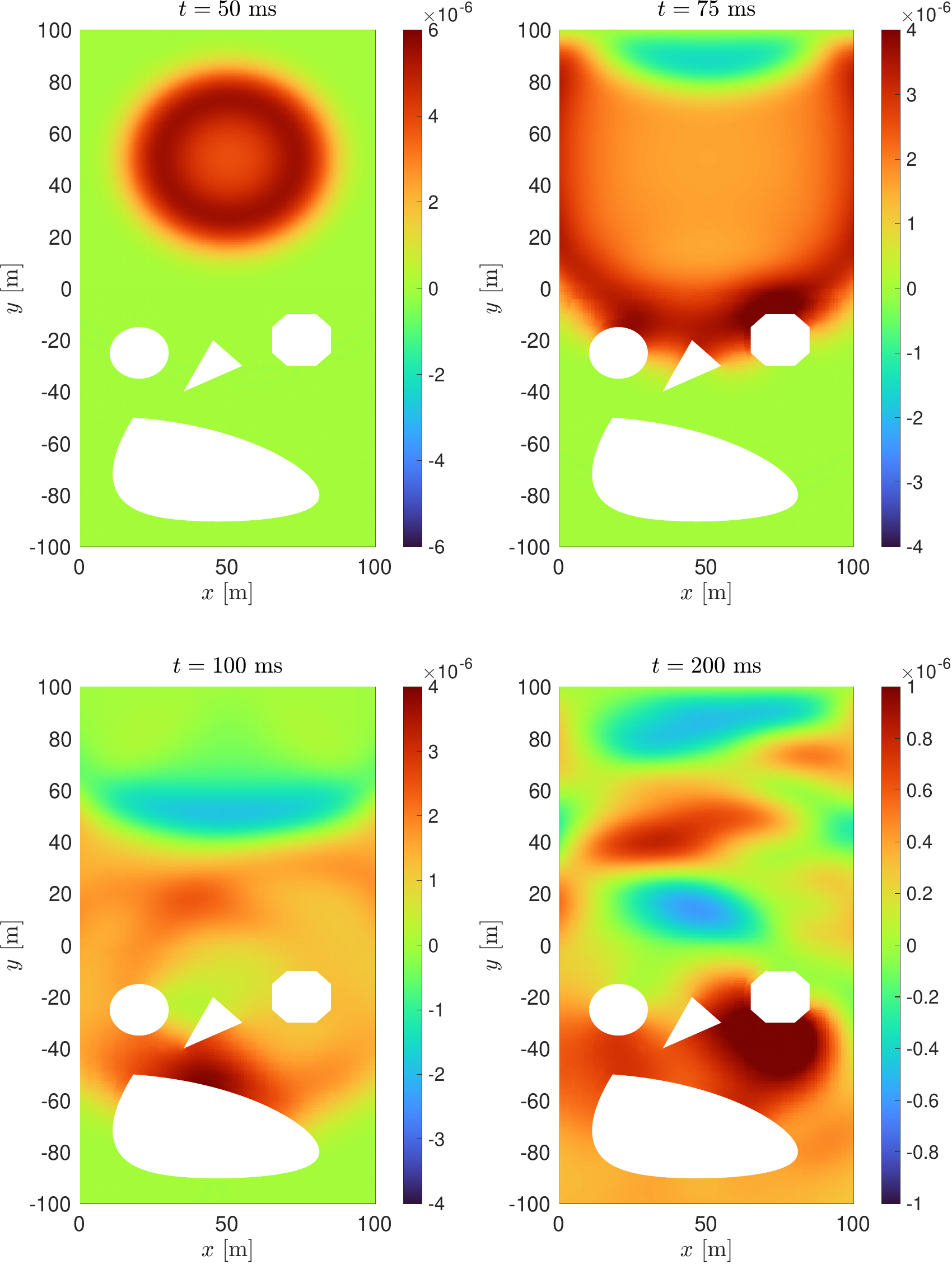}
	\caption{Numerical simulation results at $t = 50$ ms, $t = 75$ ms, $t = 100$ ms, and $t = 200$ ms of acoustic pressure waves emitting from a source in the upper block (water) and propagating into the lower block (soil) and reflecting off of the complex objects.}
	\label{fig: mb_time_results}
\end{figure}
\section{Conclusions}
\label{sec: concl}
The continuous summation-by-parts (SBP) method is used to derive an efficient discrete multiblock Laplace operator suitable for discretizations of domains with many small blocks. The interface conditions are imposed using a hybrid method where continuity of the solutions is imposed inherently in the scheme (using the continuous SBP approach) and continuity of the normal derivatives using a weak penalty method. The new method is paired together with SBP operators defined on Gauss-Lobatto (GL) quadrature points, which leads to a very efficient scheme (accurate with respect to computational costs). Since the continuous SBP method removes duplicated degrees of freedom at the interfaces and since the SBP GL operators are highly accurate for a small number of grid points, the new method is ideally suited for discretizations of complicated geometries that require a domain decomposition into many small blocks. In this work, quadrilateral blocks mapped to squares are used, but the method could also be applied to operators defined on simplex elements such as the ones presented in \cite{doi:10.1137/15M1038360}, for example.

Numerical experiments demonstrate that the new method is comparable in terms of runtime for a given $L_2$-error to the current state-of-the-art high-order accurate finite difference methods, and even more efficient for very accurate solutions. Furthermore, a numerical experiment on a more realistic problem with a complex geometry is done, where the new method is coupled to a traditional equidistant finite difference method using interpolation operators. This demonstrates a potential use case of the method where complex parts of the computational domain are discretized using the new method and the far-field regions using a traditional finite difference method. Although these results show that the new method is comparable in efficiency to traditional methods, the question remains whether this is still the case for larger problems in 3D where traditional finite differences are known to scale very well on modern hardware. Additional future work includes extensions to other equations, for example the Navier-Stokes equations and the dynamic Kirchhoff-Love plate equation, where highly accurate simulations around complex objects are of interest as well.

\section*{Statements and Declarations}
\noindent\textbf{Funding } The author was partially supported by the Swedish Research Council (grant 2021-05830 VR) and FORMAS (grant 2022-00843). 
\\
\noindent\textbf{Conflict of interest } The author has no conflicts of interest to declare that are relevant to the content of this article.

\noindent\textbf{Data availability } Data sharing not applicable to this article as no datasets were generated or analyzed during the current study.

\clearpage
\bibliography{references}

\end{document}